\pgfplotsset{compat=1.15}
\newtheorem{theorem}{\indent Theorem}[section]
\newtheorem{lemma}[theorem]{\indent Lemma}
\newtheorem{proposition}[theorem]{\indent Proposition}
\newtheorem{definition}{\indent Definition}[section]
\newtheorem{assumption}[theorem]{Assumption}%
\date{\today}
\begin{document}
	\title{Fast Accelerated Proximal Gradient Method with New Extrapolation Term for Multiobjective Optimization}
	\author{Huang Chengzhi}
	\maketitle
	\textbf{Abstract:} In this paper, we propose a novel extrapolation coefficient scheme within a new extrapolation term and develop an accelerated proximal gradient algorithm. We establish that the algorithm achieves a sublinear convergence rate. The proposed scheme only requires the Lipschitz constant estimate sequence to satisfy mild initial conditions, under which a key equality property can be derived to support the convergence analysis. Numerical experiments are provided to demonstrate the effectiveness and practical performance of the proposed method.
	
	\textbf{Keyword:} convergence rate; proximal gradient method;
\section{Introduction}\label{sec1}
In practical application scenarios, it is common to encounter problems that involve optimizing multiple objective functions simultaneously. These problems are generally referred to as multiobjective optimization problems, which typically do not have a single optimal value but rather a solution set composed of Pareto-optimal solutions. Consequently, solving such problems presents numerous challenges.

In this paper, we focus on the following so-called composite unconstrained multiobjective optimization problem:
\begin{equation}\label{mop}
	(MOP) \quad \min_{x \in \mathbb{R}^n} F(x) \equiv (f_1(x)+g_1(x), \dotsb, f_m(x)+g_m(x))^T,
\end{equation}
where $f_i :\mathbb{R}^n \to \mathbb{R}$ and $g : \mathbb{R}^n \to \mathbb{R},\forall i = 1,\dotsb,m$ are both convex functions and only $f_i$ is required to be smooth. Many practical problems, such as image restoration and compressed sensing, can be transformed into this form.

A significant advancement in the field was made by Tanabe et al. \cite{tanabe2023accelerated}, who extended the well-known Fast Iterative Shrinkage-Thresholding Algorithm (FISTA) to multi-objective optimization. This extension achieves an impressive convergence rate of $O(1/k^2)$, evaluated using a suitable merit function \cite{tanabe2023new}, marking a notable enhancement over earlier proximal gradient approaches for multi-objective problems (MOP) \cite{tanabe2019proximal}. Building upon this foundation, Nishimura et al. \cite{nishimura2022monotonicity} proposed a monotone variant of the multiobjective FISTA, contributing further to algorithmic stability and practical implementation. In addition, the framework was generalized by Tanabe et al. \cite{tanabe2022globally} through the introduction of hyperparameters, broadening its applicability to include single-objective settings as a special case. Notably, this generalized scheme maintains the favorable $O(1/k^2)$ rate and ensures convergence of the generated sequence, underscoring its robustness and theoretical soundness.


Compared to the extrapolation coefficient used in FISTA, the extrapolation term in the Nesterov-style scheme,
$$
\begin{cases}
	y_{k} = x_{k} + \frac{k -1}{k + \alpha -1} (x_{k} - x_{k-1}) \\
	x_{k+1} = p_{L(f)}(x_k,y_{k})
\end{cases}
$$
 offers more effective acceleration. Sonntag K et al\cite{sonntag2024fast} first implemented this idea in the case of $a = 3$, and Zhang et al.\cite{zhang2023fast} subsequently extended it to the case $a > 3$. However, both works suffer from incomplete theoretical justifications, as their convergence analyses rely on an auxiliary function $\sigma(z) = \min_{i 	=1,\dotsb,m} F_i(x_k) - F_i(z),\forall z \in \mathbb{R}^n$ being nonnegative—a condition that is difficult to guarantee without additional assumptions. To overcome this issue, we propose a modified approach that imposes a mild requirement on the initial estimate of the Lipschitz constant of the gradient of the smooth component $f$ in the objective function, and updates it in a fixed pattern throughout the iterations. This design enables us to derive a key equality that circumvents the need for nonnegativity of $\sigma$, thereby addressing the limitations in previous analyses.

If we remove the smoothness assumption on $f_i $ and consider the case where both $ f_i $ and $ g_i $ are nonsmooth functions but cannot be merged into a single functional form, for example, $ f_i(x) = \max(x, 0) $ and $ g_i(x) = \parallel x \parallel_1 $, then traditional subdifferential based methods, such as Mäkelä et al.’s proximal bundle method \cite{makela2003multiobjective,montonen2018multiple,haarala2004newlimited,makela1992nonsmooth} and the subgradient method, become computationally demanding. This is mainly because the subdifferential is a set-valued mapping rather than a single-valued gradient, making its computation costly and intricate. To address this challenge, Gebken et al. \cite{gebken2021efficient} proposed a subgradient descent algorithm for nonsmooth multi-objective optimization, which combines the descent direction of \cite{mahdavi2012effective} with an approximation based on Goldstein’s $\epsilon$-subdifferential \cite{goldstein1977optimization}.

However, such methods still suffer from the high computational burden associated with evaluating subdifferentials. This motivates a shift toward smoothing techniques \cite{chen2012smoothing}, that is, introducing a smoothing framework to circumvent the complexity of subdifferential computations. By integrating these smoothing strategies with acceleration mechanisms, one can achieve improved convergence rates while maintaining theoretical rigor.

\section{Preliminary Results}\label{Preliminary Results}
In this paper, for any $n \in \mathbb{N}$, $\mathbb{R}^{n}$ denotes the $n$-dimensional Euclidean space. And  $\mathbb{R}^{n}_{+}:= \{ v \in \mathbb{R}^{n} \mid v_{i} \geq 0, i=1,2,\dotsb,n \} \subseteq \mathbb{R}^{n}$ signify the non-negative orthant of $\mathbb{R}^{n}$. Besides, $\Delta^{n} := \{ \lambda \in \mathbb{R}^{n}_{+} \mid \lambda_{i} \geq 0, \sum_{i=1}^{n} \lambda_{i} = 1 \}$ represents the standard simplex in $\mathbb{R}^{n}$. Unless otherwise specified, all inner products in this article are taken in the Euclidean space. Subsequently, the partial orders induced by $\mathbb{R}^{n}_{+}$ are considered, where for any $v^{1}, v^{2} \in \mathbb{R}^{n}$, $v^{1} \leq v^{2}$ (alternatively, $v^{1} \geq v^{2}$) holds if $v^{2} - v^{1} \in \mathbb{R}^{n}_{+}$, and $v^{1} < v^{2}$ (alternatively, $v^{1} > v^{2}$) if $v^{2} - v^{1} \in \text{int} \, \mathbb{R}^{n}_{+}$. In case of misunderstand, we define the order $\preceq(\prec)$ in $\mathbb{R}^{n}$ as $$u\preceq(\prec)v~\Leftrightarrow~v-u\in\mathbb{R}^{n}_{+}(\mathbb{R}^{n}_{++}).$$
Instead, $u \npreceq v$.

From the so-called descent lemma [Proposition A.24 \cite{bertsekas1999nonlinear}], we
have $f_{i}(p)-f_{i}(q)\leq\langle\nabla f_{i}(q),p-q\rangle+(L/2)\|p-q\|^{2}$ for all $p,q\in\mathbb{R}^{n}$ and $i=1,\ldots,m$, which gives

\begin{equation}\label{descent lemma}
	\begin{aligned}F_{i}(p)-F_{i}(r)&=f_{i}(p)-f_{i}(q)+g_{i}(p)+f_{i}(q)-F_{i}(r)\\&\leq\langle\nabla f_{i}(q),p-q\rangle+g_{i}(p)+f_{i}(q)-F_{i}(r)+\frac{L}{2}\left\|p-q\right\|^{2}
	\end{aligned}
\end{equation}
for all $p,q,r\in\mathbb{R}^{n}$ and $i=1,\ldots,m.$

To construct the proximal gradient algorithm, we first introduce some basic definitions for the upcoming discussion. For a closed, proper, and convex function $ h: \mathbb{R}^n \to \mathbb{R} \cup \{ \infty \} $, the Moreau envelope of $ h $ is defined as

$$
\mathcal{M}_h(x) := \min_{y \in \mathbb{R}^n} \left\{ h(y) + \frac{1}{2} \left\| x - y \right\|^2 \right\}.
$$

The unique solution to this problem is called the proximal operator of $ h $, denoted as

$$
\operatorname{prox}_h(x) := \arg\min_{y \in \mathbb{R}^n} \left\{ h(y) + \frac{1}{2} \left\| x - y \right\|^2 \right\}.
$$

Next, we introduce a property between the Moreau envelope and proximal operation by following the lemma.
\begin{lemma}[\cite{rockafellar1997convex}]
	If  {$h$} is a proper closed and convex function, the Moreau envelope
	$\mathcal{M}_{h}$ is Lipschitz continuous and takes the following form,
	$$\nabla \mathcal{M}_{h}(x) := x- prox_{h}(x).$$
\end{lemma}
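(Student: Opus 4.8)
The plan is to establish the three assertions in sequence: that $\operatorname{prox}_h$ is well defined, that $\mathcal{M}_h$ is differentiable with the stated gradient, and that this gradient is Lipschitz. First I would observe that for fixed $x$ the inner objective $y \mapsto h(y) + \tfrac{1}{2}\|x-y\|^2$ is strongly convex and coercive, so its minimizer exists and is unique; write $p(x) := \operatorname{prox}_h(x)$. The first-order optimality condition for this inner problem reads $0 \in \partial h(p(x)) + (p(x) - x)$, equivalently
$$x - p(x) \in \partial h(p(x)),$$
and this inclusion is the identity that drives the entire argument.

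Next I would record firm nonexpansiveness of $p$. Applying the optimality inclusion at two points $x_1, x_2$ and using monotonicity of the subdifferential $\partial h$ yields $\langle x_1 - x_2,\, p(x_1) - p(x_2)\rangle \geq \|p(x_1) - p(x_2)\|^2$; in particular $p$ is $1$-Lipschitz. For the differentiability claim I would then sandwich the increment $\mathcal{M}_h(z) - \mathcal{M}_h(x)$ between two matching bounds. For the upper bound I would use $p(x)$ as a (suboptimal) candidate in the definition of $\mathcal{M}_h(z)$ and expand the square to get
$$\mathcal{M}_h(z) - \mathcal{M}_h(x) \leq \langle x - p(x),\, z - x\rangle + \tfrac{1}{2}\|z - x\|^2.$$

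For the matching lower bound I would invoke the subgradient inequality arising from $x - p(x) \in \partial h(p(x))$, evaluated at the test point $w = p(z)$, and regroup the remaining terms into the manifestly nonnegative quantity $\tfrac{1}{2}\bigl\|(z - p(z)) - (x - p(x))\bigr\|^2$; this produces
$$\mathcal{M}_h(z) - \mathcal{M}_h(x) \geq \langle x - p(x),\, z - x\rangle.$$
Combining the two bounds gives $0 \leq \mathcal{M}_h(z) - \mathcal{M}_h(x) - \langle x - p(x),\, z - x\rangle \leq \tfrac{1}{2}\|z - x\|^2$, so dividing by $\|z - x\|$ and letting $z \to x$ establishes Fréchet differentiability with $\nabla \mathcal{M}_h(x) = x - p(x)$.

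Finally, Lipschitz continuity of the gradient follows by writing $\nabla\mathcal{M}_h(x_1) - \nabla\mathcal{M}_h(x_2) = (x_1 - x_2) - (p(x_1) - p(x_2))$, expanding the squared norm, and using firm nonexpansiveness to bound the cross term, which yields $\|\nabla\mathcal{M}_h(x_1) - \nabla\mathcal{M}_h(x_2)\| \leq \|x_1 - x_2\|$. The step I expect to be the main obstacle is the lower bound in the differentiability argument: forcing the subgradient inequality to collapse exactly to a perfect square requires the specific choice $w = p(z)$ and a careful regrouping of the inner-product and norm contributions, and this algebraic reduction is the crux on which the gradient formula rests.
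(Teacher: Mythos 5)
Your argument is correct and complete: the optimality inclusion $x - \operatorname{prox}_h(x) \in \partial h(\operatorname{prox}_h(x))$, the two-sided sandwich of the increment, and firm nonexpansiveness together give exactly the stated gradient formula and its $1$-Lipschitz continuity. The paper supplies no proof of this lemma (it only cites Rockafellar), and your proposal is the standard classical argument found in that reference, so there is nothing to reconcile.
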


The following assumption is made throughout the paper:

\begin{assumption}
	$f:\mathbb{R}^n \to \mathbb{R}$ is continuously differentiable with Lipschitz continuous gradient $L(f)$:
	\begin{equation}\label{assump2.1}
		\parallel \nabla f(x) - \nabla f(y) \parallel \leq L(f) \parallel x - y \parallel, \quad \forall x,y \in \mathbb{R}^n,
	\end{equation}
	where $\parallel \cdot \parallel$ stands for standard Euclidean norm unless specified otherwise.
\end{assumption}

We now revisit the optimality criteria for the multiobjective optimization problem denoted as (\ref{mop}). An element $x^{*} \in \mathbb{R}^n$ is deemed weakly Pareto optimal if there does not exist $x \in \mathbb{R}^{n}$ such that $F(x) < F(x^{*})$, where $F: \mathbb{R}^{n} \to \mathbb{R}^{m}$ represents the vector-valued objective function, The ensemble of weakly Pareto optimal solutions is denoted as $X^{*}$. The merit function $u_{0}: \mathbb{R}^{n} \to \mathbb{R} \cup \{\infty\}$, as introduced in \cite{tanabe2023new}, is expressed in the following manner:

\begin{equation}\label{3}
	u_0(x) := \sup_{z \in \mathbb{R}^{n}} \min_{i 	=1,\dotsb,m}[F_{i}(x) - F_{i}(z)].
\end{equation}

The following lemma proves that $u_0$ is a merit function in the Pareto sense.

\begin{lemma}[\cite{tanabe2023accelerated}]
	Let $u_0$ be given as (\ref{3}), then $u_0(x) \geq 0, x \in \mathbb{R}^{n}$, and $x$ is the
	weakly Pareto optimal for (\ref{mop}) if and only if $u_0(x) = 0$.
\end{lemma}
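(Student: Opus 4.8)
The plan is to establish the three assertions in turn: the nonnegativity of $u_0$, and the two directions of the equivalence between weak Pareto optimality and $u_0(x)=0$. The whole argument rests on a single observation that ties the \emph{sign} of $u_0(x)$ to the existence of a point that strictly dominates $x$ in the sense of (\ref{mop}).

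First I would prove $u_0(x)\geq 0$ directly from the definition. Because the supremum in (\ref{3}) ranges over all $z\in\mathbb{R}^n$, I may insert the admissible candidate $z=x$, for which $\min_{i=1,\dotsb,m}[F_i(x)-F_i(x)]=0$. Hence the supremum is bounded below by $0$, which gives $u_0(x)\geq 0$ for every $x\in\mathbb{R}^n$ without any further work.

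Next I would characterize strict positivity of $u_0$. The key equivalence is
$$u_0(x)>0 \iff \exists\, z\in\mathbb{R}^n \text{ such that } \min_{i=1,\dotsb,m}[F_i(x)-F_i(z)]>0.$$
The backward direction is immediate, since such a $z$ forces the supremum to be at least a positive number. For the forward direction, if every $z$ satisfied $\min_i[F_i(x)-F_i(z)]\leq 0$, then $0$ would be an upper bound for the set over which the supremum is taken, contradicting $u_0(x)>0$; thus some $z$ must yield a strictly positive value. Now $\min_i[F_i(x)-F_i(z)]>0$ says exactly that $F_i(z)<F_i(x)$ for all $i$, i.e. $F(z)<F(x)$. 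By the definition of weak Pareto optimality, the existence of such a $z$ is precisely the statement that $x$ is \emph{not} weakly Pareto optimal.

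Finally I would combine these facts. The chain above shows that $u_0(x)>0$ holds if and only if $x$ is not weakly Pareto optimal; equivalently, $x$ is weakly Pareto optimal if and only if $u_0(x)\leq 0$. Since the first step already guarantees $u_0(x)\geq 0$, the inequality $u_0(x)\leq 0$ collapses to the equality $u_0(x)=0$, which completes the proof. I expect the only point requiring genuine care to be the forward direction of the sign characterization—passing from ``the supremum is positive'' to ``some admissible $z$ attains a positive value''—which is settled by the upper-bound argument above and does not require the supremum to be attained.
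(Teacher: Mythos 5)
Your proposal is correct, and it is essentially the standard argument: nonnegativity follows by taking $z=x$ in the supremum, and the sign characterization $u_0(x)>0 \iff \exists\, z \text{ with } F(z)<F(x)$ (handled carefully without requiring the supremum to be attained) gives the equivalence with weak Pareto optimality. The paper itself omits the proof and simply cites Tanabe et al., whose proof proceeds along the same lines as yours, so there is no substantive difference to report.
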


In the context of this study, we introduce an algorithm utilizing the smoothing function delineated in \cite{chen2012smoothing}. This smoothing function serves the purpose of approximating the nonsmooth convex function \(f\) by a set of smooth convex functions, thereby facilitating the application of gradient-based optimization techniques.
\begin{definition}\label{def1}
	For convex function $f$ in (\ref{nmop}), we call $\tilde{f}:\mathbb{R}^{n} \times \mathbb{R}_{+} \to \mathbb{R}$ a smoothing
	function of $f$, if $\tilde{f}$ satisfies the following conditions:
	
	(i) for any fixed $\mu > 0$,$\tilde{f}( \cdot, \mu)$ is continuously differentiable on $\mathbb{R}^{n}$;
	
	(ii) $\lim_{z \to x,\mu \downarrow 0}\tilde{f}(z,\mu) = f(x), \forall x \in \mathbb{R}^{n}$;
	
	(iii) (gradient consistence) $\{\lim_{z \to x,\mu \downarrow 0}\tilde{f}(z,\mu)\} \subseteq \partial f(x), \forall x \in \mathbb{R}^{n}$ ;
	
	(iv) for any fixed $\mu >0$, $\tilde{f}(z,\mu)$ is convex on $\mathbb{R}^{n}$;
	
	(v) there exists a $k > 0$ such that
	$$|\tilde{f}(x,\mu_2) - \tilde{f}(x,\mu_1)| \leq \kappa|\mu_1-\mu_2|, \forall x \in \mathbb{R}^{n}, \mu_1,\mu_2 \in \mathbb{R}_{++};$$
	
	(vi) there exists an $L > 0$ such that $\nabla_{x}\tilde{f}(\cdot,\mu)$ is Lipschitz continuous on $\mathbb{R}^{n}$ with
	factor $L\mu^{-1}$ for any fixed $\mu \in \mathbb{R}_{++}$.
\end{definition}

Combining properties (ii) and (v) in Definition (\ref{def1}), we have

$$|\tilde{f}(x,\mu) - f(x)| \leq \kappa\mu,\forall x \in \mathbb{R}^{n},\mu \in \mathbb{R}_{++}.$$

\section{ An accelerated proximal gradient method for multiobjective optimization}\label{Algorithm}
This section presents an accelerated version of the proximal gradient method. Similar to \cite{tanabe2023accelerated}, we start from (\ref{descent lemma}) and give the subproblem required for each iteration: for a given $x \in dom {F}$, $y \in \mathbb{R}^n$:

\begin{equation}\label{subproblem}
	\min_{z \in \mathbb{R}^n} \varphi_{L(f)} (z; x, y),
\end{equation}
where
$$
\varphi_{L(f)} (z; x, y) := \max_{i=1,\dotsb,n} [ \langle 	\nabla f_i(y), z - y\rangle + g_i(z) + f_i(y) - F_i(x)] + \frac{L(f)}{2} \parallel z - y \parallel^2.
$$

Since $g_i$ is convex for all $i = 1, \dotsb, m$, $z \mapsto \varphi_{L(f)} (z; x, y)$ is strong convex. Thus, the subproblem (\ref{subproblem}) has a unique optimal solution $p_{L(f)}(x, y)$ and takes the optimal function value $\theta_{L(f)} (x, y)$, i.e.,

\begin{equation}\label{pl and theta}
	\begin{aligned}
		p_{L(f)}(x, y) &:= {\arg\min}_{z \in \mathbb{R}^n} 	\varphi_{L(f)}(z; x, y) \\
		\theta_{L(f)}(x, y) &:= {\min}_{z \in \mathbb{R}^n} \varphi_{L(f)}(z; x, y). 	
	\end{aligned}
\end{equation}

Moreover, the optimality condition of (\ref{subproblem}) implies that for all $x \in dom {F}$ and $y \in \mathbb{R}^n$ there exists $\tilde{g}(x, y) \in \partial g(p_{L(f)}(x, y))$ and a Lagrange multiplier $\lambda(x, y) \in \mathbb{R}^m$ such that
\begin{equation}\label{optimal condition a}
	\sum_{i=1}^{m} \lambda_i(x, y) [ \nabla f_i(y) + 	\tilde{g}_i(x, y)] = - L(f) [p_{L(f)}(x, y) - y]
\end{equation}

\begin{equation}\label{optimal condition b}
	\lambda(x, y) \in \Delta^m, \quad \lambda_j (x, y) = 0 	\quad for \ all \ j \notin \mathcal{I}(x, y),
\end{equation}
where $\Delta^m$ denotes the standard simplex and
\begin{equation}
	\mathcal{I}(x, y) := {\arg\min}_{i = 1, \dotsb, m} [ 	\langle \nabla f_i(x), p_{L(f)}(x, y) - y\rangle + g_i(p_{L(f)} (x, y)) + f_i(y) - F_i(x)].
\end{equation}

We note that by taking $z = y$ in the objective function of (\ref{subproblem}), we get
\begin{equation}
	\theta_{L(f)}  (x, y) \leq \varphi_{L(f)} (y; x, y) = 	\max_{i=1,\dotsb,m} \{ F_i(y) - F_i(x)\}
\end{equation}
for all $x \in dom {F}$ and $y \in \mathbb{R}^n$. Moreover, from (8) with $p = z, q = y, r = x$, it follows that
$$
\theta_{L(f)} (x, y) \geq \max_{i=1,\dotsb,m} \{ F_i(p_{L(f)}(x, y)) - F_i(x)\}
$$
for all $x \in dom {F}$ and $y \in \mathbb{R}^n$. Now we use the same statement of [Proposition 4.1, \cite{tanabe2023accelerated}] to characterize weak Pareto optimality in terms of the mappings $p_{L(f)}$ and $\theta_{L(f)}$.

\begin{proposition}\label{prop for pl}
	Let $p_{L(f)}(x, y)$ and $\theta_{L(f)}(x, y)$ be defined by (\ref{pl and theta}). Then, the following three conditions are equivalent:
	
	(i) $y \in \mathbb{R}^n$ is weakly Pareto optimal for (\ref{mop});
	
	(ii) $p_{L(f)} (x, y) = y$ for some $x \in \mathbb{R}^n$;
	
	(iii) $\theta_{L(f)} (x, y) = \max_{i=1,\dotsb,m} [F_i(y) - F_i(x)]$ for some $x \in \mathbb{R}$.
\end{proposition}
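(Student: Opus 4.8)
The plan is to prove the three equivalences through the cycle $(i) \Rightarrow (ii) \Rightarrow (iii) \Rightarrow (i)$, exploiting throughout the identity obtained by evaluating the subproblem objective at $z = y$, namely
$$
\varphi_{L(f)}(y; x, y) = \max_{i=1,\dotsb,m}[F_i(y) - F_i(x)],
$$
together with the two one-sided estimates for $\theta_{L(f)}$ already recorded before the statement. (I read the ``$x\in\mathbb{R}$'' in (iii) as a typo for $x\in\mathbb{R}^n$.)

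For $(ii) \Rightarrow (iii)$ I would substitute $p_{L(f)}(x,y) = y$ into $\theta_{L(f)}(x,y) = \varphi_{L(f)}(p_{L(f)}(x,y); x, y)$ and invoke the displayed identity to get $\theta_{L(f)}(x,y) = \max_i[F_i(y) - F_i(x)]$. For $(iii) \Rightarrow (i)$ I would first note that $(iii)$ says $y$ attains the minimum of the strongly convex map $z \mapsto \varphi_{L(f)}(z; x, y)$, so by uniqueness of the minimizer $p_{L(f)}(x,y) = y$; then I would feed this into the optimality conditions (\ref{optimal condition a})--(\ref{optimal condition b}). The right-hand side of (\ref{optimal condition a}) vanishes, leaving $\sum_i \lambda_i[\nabla f_i(y) + \tilde g_i(x,y)] = 0$ with $\lambda \in \Delta^m$ and $\tilde g_i(x,y) \in \partial g_i(y)$; since $\nabla f_i(y) + \tilde g_i(x,y) \in \partial F_i(y)$, this reads $0 \in \partial\!\left(\sum_i \lambda_i F_i\right)\!(y)$, so $y$ minimizes the convex scalarization $\sum_i \lambda_i F_i$. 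A short contradiction then finishes: if some $z$ satisfied $F(z) < F(y)$ componentwise, then, as $\lambda \in \Delta^m$ has at least one positive entry, $\sum_i \lambda_i F_i(z) < \sum_i \lambda_i F_i(y)$, contradicting minimality; hence $y$ is weakly Pareto optimal.

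The remaining and most delicate implication is $(i) \Rightarrow (ii)$, which I would establish with the choice $x = y$. The descent-lemma estimate gives $\max_i[F_i(p) - F_i(y)] \le \theta_{L(f)}(y,y)$ for $p := p_{L(f)}(y,y)$, while strong convexity of $\varphi_{L(f)}(\cdot; y, y)$ with modulus $L(f)$ and minimizer $p$ yields the sharpened bound $\theta_{L(f)}(y,y) = \varphi_{L(f)}(p; y,y) \le \varphi_{L(f)}(y; y, y) - \frac{L(f)}{2}\|p - y\|^2 = -\frac{L(f)}{2}\|p - y\|^2$. Chaining these gives $\max_i[F_i(p) - F_i(y)] \le -\frac{L(f)}{2}\|p - y\|^2$, so if $p \ne y$ the right-hand side is strictly negative and $F(p) < F(y)$ holds in every component, contradicting the weak Pareto optimality of $y$; hence $p_{L(f)}(y,y) = y$.

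The main obstacle is precisely this last step: the descent-lemma bound alone only yields $F(p) \le F(y)$, which is too weak to contradict weak Pareto optimality (which forbids only strict domination). The crucial device is to extract the extra quadratic gap $-\frac{L(f)}{2}\|p-y\|^2$ from the strong convexity of the subproblem, converting a non-strict inequality into genuine strict domination $F(p) < F(y)$. I would take care to justify the strong-convexity estimate (the $\frac{L(f)}{2}\|z-y\|^2$ term supplies modulus $L(f)$, the max-of-affine-plus-$g_i$ part being merely convex) and the inclusion $\nabla f_i(y) + \partial g_i(y) \subseteq \partial F_i(y)$ used in $(iii) \Rightarrow (i)$.
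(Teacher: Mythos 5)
Your proof is correct. Note, however, that the paper itself does not supply an argument here: it simply states that the proof is ``similar to [Proposition 4.1, Tanabe et al.]'' and omits it, so there is no in-paper proof to compare against; your write-up is in fact a self-contained version of exactly the standard argument behind that cited result. The cycle $(ii)\Rightarrow(iii)$ (evaluate $\varphi_{L(f)}$ at $z=y$), $(iii)\Rightarrow(i)$ (uniqueness of the minimizer forces $p_{L(f)}(x,y)=y$, then the optimality conditions (\ref{optimal condition a})--(\ref{optimal condition b}) give $0\in\partial\bigl(\sum_i\lambda_iF_i\bigr)(y)$ and weak Pareto optimality follows from the scalarization with $\lambda\in\Delta^m$), and $(i)\Rightarrow(ii)$ with the choice $x=y$ are all sound, and you correctly isolate the delicate point: the descent-lemma bound alone only yields $F(p)\preceq F(y)$, and one must upgrade this to strict componentwise domination. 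One small remark: the explicit strong-convexity modulus is not needed for that upgrade --- since $p=p_{L(f)}(y,y)$ is the \emph{unique} minimizer of $\varphi_{L(f)}(\cdot;y,y)$, the assumption $p\neq y$ already gives the strict inequality $\theta_{L(f)}(y,y)<\varphi_{L(f)}(y;y,y)=0$, which chained with $\max_i[F_i(p)-F_i(y)]\leq\theta_{L(f)}(y,y)$ yields $F(p)\prec F(y)$; your quantitative bound $-\tfrac{L(f)}{2}\|p-y\|^2$ is a correct but slightly stronger statement than required. You are also right that ``$x\in\mathbb{R}$'' in (iii) (and the index ``$n$'' in the definition of $\varphi_{L(f)}$) are typos for $\mathbb{R}^n$ and $m$ respectively.
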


\begin{proof}
	The proof is similar to [Proposition 4.1, \cite{tanabe2023accelerated}], so we omit it.
\end{proof}

Proposition \ref{prop for pl} suggests that we can use $\parallel p_{L(f)}(x, y) - y \parallel_{\infty} < \varepsilon$ for some $\varepsilon > 0$ as a stopping criterion.

\begin{algorithm}[H]
	\renewcommand{\algorithmicrequire}{\textbf{Input:}}
	\renewcommand{\algorithmicensure}{\textbf{Output:}}
	\caption{The Smoothing Accelerated Proximal Gradient Method with
		Extrapolation term for Multi-objective
		Optimization}
	\label{alg1}
	\begin{algorithmic}[1]
		\REQUIRE Take initial point $x_{-1}=x_0  \in \text{dom}F$, $y_{0} = x_{0}$, $\varepsilon >0$,  $\gamma_0 \in R_{++}$. Choose parameters  
		$\eta = \frac{(k+\alpha-2)^2}{(k + \alpha -1)(k + \alpha - 3)}$, $0 < \frac{\alpha-2}{\alpha-3}s_0 < \frac{1}{L(f)}$,if $\alpha > 3$, $0 < s_0 < \frac{1}{L(f)}$, if $\alpha = 3$, maximum iteration number $K$.
		\FOR{$k = 0,1, \dotsb, K$}
		\STATE Compute $y_{k} = x_{k} + \frac{k + \alpha - 4}{k + \alpha -1} (x_{k} - x_{k-1}).$
		\STATE compute ${x}_{k+1} = p_{s_k}(x_k,y_{k}).$

		\STATE $s_{k+1} = \eta s_{k}$
		\IF{$\left\|x_k - x_{k+1} \right\| < \varepsilon$}
		\RETURN $x_{k+1}$
		\ENDIF
		\ENDFOR
		\ENSURE $x^*$: A weakly Pareto optimal point
	\end{algorithmic}
\end{algorithm}
The following theorem will explain why we update $s_k$ in this way and why no line search or similar technique is needed to assist the update.

\begin{theorem}
	If $\eta$ is taken in the form of $\eta = \frac{(k+\alpha-2)^2}{(k + \alpha -1)(k + \alpha - 3)}$, and $s_0$ satisfies $0 < \frac{\alpha-2}{\alpha-3}s_0 < \frac{1}{L(f)}$, if $\alpha > 3$; $0 < s_0 < \frac{1}{L(f)}$, if $\alpha = 3$. Then we have:
	$$
	s_k < \frac{1}{L(f)}, \quad \forall k = 1, \dotsb, K.
	$$
\end{theorem}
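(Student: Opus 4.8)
The plan is to convert the multiplicative recursion $s_{k+1}=\eta s_k$ into a closed form for $s_k$ and then bound it directly. Since the factor $\eta=\frac{(k+\alpha-2)^2}{(k+\alpha-1)(k+\alpha-3)}$ depends on the iteration index, I would first make that dependence explicit by writing $\eta_j$ for the value used at step $j$, so that unrolling the recursion gives
\[
s_k = s_0\prod_{j=0}^{k-1}\eta_j = s_0\prod_{j=0}^{k-1}\frac{(j+\alpha-2)^2}{(j+\alpha-1)(j+\alpha-3)}.
\]
For $\alpha>3$ each of $j+\alpha-3,\ j+\alpha-2,\ j+\alpha-1$ is strictly positive when $j\ge 0$, so the product is well defined and positive; this positivity is what legitimizes the manipulations that follow.

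The key step is to recognize that the product telescopes once the square in the numerator is split. I would factor $\frac{(j+\alpha-2)^2}{(j+\alpha-1)(j+\alpha-3)}=\frac{j+\alpha-2}{j+\alpha-1}\cdot\frac{j+\alpha-2}{j+\alpha-3}$ and treat the two chains separately: the first collapses as $\prod_{j=0}^{k-1}\frac{j+\alpha-2}{j+\alpha-1}=\frac{\alpha-2}{k+\alpha-2}$ and the second as $\prod_{j=0}^{k-1}\frac{j+\alpha-2}{j+\alpha-3}=\frac{k+\alpha-3}{\alpha-3}$, since in each chain consecutive numerators and denominators cancel. Multiplying yields the closed form
\[
s_k = s_0\,\frac{(\alpha-2)(k+\alpha-3)}{(\alpha-3)(k+\alpha-2)} = s_0\,\frac{\alpha-2}{\alpha-3}\cdot\frac{k+\alpha-3}{k+\alpha-2},
\]
which I expect is precisely the ``key equality'' the paper alludes to as the backbone of the convergence analysis. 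A quick sanity check at $k=1$ recovers $s_1=s_0\frac{(\alpha-2)^2}{(\alpha-1)(\alpha-3)}=s_0\eta_0$, confirming the telescoping bookkeeping.

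With the closed form in hand the bound is immediate: for $\alpha>3$ and $k\ge 1$ we have $k+\alpha-2>0$, hence $\frac{k+\alpha-3}{k+\alpha-2}=1-\frac{1}{k+\alpha-2}<1$, and therefore $s_k<s_0\frac{\alpha-2}{\alpha-3}<\frac{1}{L(f)}$ by the initial hypothesis $0<\frac{\alpha-2}{\alpha-3}s_0<\frac{1}{L(f)}$. I would also remark that $1-\frac{1}{k+\alpha-2}$ increases toward $1$, so $\{s_k\}$ is monotonically increasing with supremum $s_0\frac{\alpha-2}{\alpha-3}$ that is never attained for finite $k$; this explains why no line search is needed, as the step sizes automatically stay below $1/L(f)$ while growing as large as possible.

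The main obstacle I anticipate is the boundary case $\alpha=3$ rather than the $\alpha>3$ computation. There the closed form degenerates (the factor $\alpha-3$ vanishes and $\eta_0$ is undefined), so the telescoping argument cannot be applied verbatim; instead the step must be kept constant, $s_k=s_0$, for which the claim collapses to the stated hypothesis $0<s_0<\frac{1}{L(f)}$. I would therefore split the proof into the two regimes announced in the theorem, carry out the telescoping for $\alpha>3$, and dispose of $\alpha=3$ by the constant-step observation.
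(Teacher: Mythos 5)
Your treatment of the case $\alpha>3$ is correct and is essentially the paper's own argument: the paper likewise unrolls the recursion $s_{k+1}=\eta s_k$ into a closed form (written there with factorials, $s_{k+1}=\frac{[(k+\alpha-2)!]^2(\alpha-2)!(\alpha-4)!}{[(\alpha-3)!]^2(k+\alpha-1)!(k+\alpha-3)!}\,s_0$, which simplifies to exactly your $s_{k+1}=\frac{\alpha-2}{\alpha-3}\cdot\frac{k+\alpha-2}{k+\alpha-1}\,s_0$) and then bounds the last fraction by $1$ and invokes $\frac{\alpha-2}{\alpha-3}s_0<\frac{1}{L(f)}$. Your two-chain telescoping, the sanity check at $k=1$, and the monotonicity remark are all consistent with that.

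The genuine gap is your disposal of $\alpha=3$. You assert that in this regime ``the step must be kept constant, $s_k=s_0$,'' but that is not what the algorithm prescribes: the update $s_{k+1}=\frac{(k+1)^2}{k(k+2)}s_k$ is well defined for every $k\ge 1$ and its factor is strictly greater than $1$, so the sequence is strictly increasing, not constant; only the single step $k=0$ is degenerate. The paper handles $\alpha=3$ by telescoping this recursion as well, writing $s_{k+1}=\frac{(k+1)^2}{k(k+2)}s_k=\frac{k+1}{k+2}s_0<s_0<\frac{1}{L(f)}$. Note that this case is genuinely delicate: with the natural convention $s_1=s_0$ for the undefined $k=0$ update, the telescoped product is $s_{k+1}=\frac{2(k+1)}{k+2}s_1\to 2s_1$, so the bound $s_k<\frac{1}{L(f)}$ does not follow from $s_0<\frac{1}{L(f)}$ alone unless the first step is bookkept in a specific way (the paper's displayed closed form implicitly presupposes such a convention). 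In any event, you cannot dismiss $\alpha=3$ by declaring the step size constant; you need to telescope the recursion from $k=1$ onward, state explicitly how $s_1$ is defined, and then check whether the resulting limit stays below $\frac{1}{L(f)}$ under the stated hypothesis.
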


\begin{proof}
	From the definition of $s_k$, we have 	$s_{k+1} = \frac{(k+ \alpha - 2)^2}{(k+\alpha-1)(k+\alpha-3)} s_k$. 
	When $a > 3$, 
	$$s_{k+1} = \frac{[(k+\alpha-2)!]^2 (\alpha - 2)! (\alpha - 4)!}{[(\alpha - 3)!]^2 (k + \alpha - 1)! (k + \alpha - 3)!} s_0  \leq \frac{k+\alpha -2}{k + \alpha - 1} \frac{1}{L(f)} < \frac{1}{L(f)}.$$ 
	And when $a = 3$, 
	$$
	s_{k+1} = \frac{(k+1)^2}{k(k+2)} s_k = \frac{k+1}{k+2}s_0 < \frac{1}{L(f)}.
	$$
	Therefore, it follows that 
	$$
	s_k < \frac{1}{L(f)}, \quad \forall k = 1, \dotsb, K.
	$$	
\end{proof}
Algorithm \ref{alg1} is known to generate $x_k$ such that $u_0(x_k)$ converges to zero with rate $O(1/k^2)$ under the following assumption.
\begin{assumption}\label{a1}
	Suppose $ X^{*} $ is set of the weakly Pareto optimal points and $\mathcal{L}_{F}(c) := \{x \in \mathbb{R}^{n} F(x) \leq c\}$, then for any $x \in \mathcal{L}_{F}(F(x_0)), k\geq 0$, then there exists
	$x \in X^{*}$such that $F(x^{*}) \leq F(x)$ 
	$$
	\begin{aligned}
		R := \sup_{F^{*} \in F(X^{*} \cap \mathcal{L}_{F}( F(x_0))} &\inf_{z \in F^{-1}(\{F^{*}\})}
		(4 \parallel 2 x_0 - z \parallel^2 + \left\|x^{1}-z\right\|^{2})< \infty.
	\end{aligned}
	$$
\end{assumption}

\section{Convergence Rate of the Algorithm \ref{alg1}}

This section focuses on proving that the algorithm has a convergence rate of $O(1/k^2)$ under Assumption 1. Before doing so, let us introduce some auxiliary sequences in \cite{tanabe2023accelerated} to prove the convergence rate. For $k \geq 0$, let $\sigma_k : \mathbb{R}^n \to \mathbb{R}\cup \{- \infty\}$ and $u_k : \mathbb{R}^n \to \mathbb{R}$ be similarly defined in the following form
\begin{equation}\label{aux seq}
	\begin{aligned}
		\sigma_k(z) &:= \min_{i=1,\dotsb,m} [F_i(x_k) - F_i(z)] \\
		\rho_{p}(z) &:=\left\|(k+\alpha-2)x^{p}-(k + \alpha - 4)x^{p-1} - z\right\|^{2}.
	\end{aligned}
\end{equation}

Now, we give an important property of $\sigma_k(z)$ that will be helpful in the following discussions.

\begin{proposition}\label{prop5.2} 
	
	Let $\sigma_k(z)$ define as (\ref{aux seq}), we have
	\begin{equation}\label{15}
		\sigma_k(z) - \sigma_{k+1}(z)
		\geq -\frac{1}{2s_k} [2\langle  y_k -x_{k+1} , y_k -x_k\rangle  + \parallel x_{k+1} - y_k \parallel^2]. ,
	\end{equation}
	and
	\begin{equation}\label{16}
		\begin{aligned}
			\sigma_{k+1}(z) \leq& \frac{1}{2 s_k}[2\langle y_k-x_{k+1},y_k-z\rangle -\|x_{k+1}-y_k\|^2].
		\end{aligned}
	\end{equation}
\end{proposition}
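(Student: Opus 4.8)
The plan is to derive both inequalities from a single computation resting on three ingredients: the descent lemma (\ref{descent lemma}) together with the step-size bound $s_k < 1/L(f)$ from the preceding theorem (which lets me replace the constant $L(f)/2$ by the larger $1/(2s_k)$ in every quadratic term), the convexity of each $f_i$ and $g_i$, and the optimality conditions (\ref{optimal condition a})--(\ref{optimal condition b}) for $x_{k+1} = p_{s_k}(x_k, y_k)$, which supply a multiplier $\lambda = \lambda(x_k,y_k) \in \Delta^m$ and subgradients $\tilde g_i \in \partial g_i(x_{k+1})$ satisfying $\sum_i \lambda_i[\nabla f_i(y_k) + \tilde g_i] = -\frac{1}{s_k}(x_{k+1} - y_k)$.

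For (\ref{16}) I would start from $\lambda \in \Delta^m$, which gives $\sigma_{k+1}(z) = \min_i[F_i(x_{k+1}) - F_i(z)] \le \sum_i \lambda_i[F_i(x_{k+1}) - F_i(z)]$. I would bound each summand by splitting $F_i(x_{k+1}) - F_i(z) = [f_i(x_{k+1}) - f_i(z)] + [g_i(x_{k+1}) - g_i(z)]$ and applying: the descent lemma to $f_i$ between $y_k$ and $x_{k+1}$, the gradient inequality $f_i(z) \ge f_i(y_k) + \langle \nabla f_i(y_k), z - y_k\rangle$, and the subgradient inequality $g_i(z) \ge g_i(x_{k+1}) + \langle \tilde g_i, z - x_{k+1}\rangle$. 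This collapses to $F_i(x_{k+1}) - F_i(z) \le \langle \nabla f_i(y_k) + \tilde g_i, x_{k+1} - z\rangle + \frac{1}{2s_k}\|x_{k+1} - y_k\|^2$. Taking the $\lambda$-combination and substituting the optimality relation turns the linear part into $-\frac{1}{s_k}\langle x_{k+1} - y_k, x_{k+1} - z\rangle$; expanding $x_{k+1} - z = (y_k - z) - (y_k - x_{k+1})$ and collecting the quadratic terms yields exactly the right-hand side of (\ref{16}).

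For (\ref{15}) the extra wrinkle is the minimum over $i$. I would first isolate it by choosing $j$ attaining $\sigma_k(z) = F_j(x_k) - F_j(z)$; since $\sigma_{k+1}(z) \le F_j(x_{k+1}) - F_j(z)$, subtraction gives the index-free bound $\sigma_k(z) - \sigma_{k+1}(z) \ge \min_i[F_i(x_k) - F_i(x_{k+1})] = -\max_i[F_i(x_{k+1}) - F_i(x_k)]$. Applying the descent lemma (\ref{descent lemma}) with $p = x_{k+1}$, $q = y_k$, $r = x_k$ shows each $F_i(x_{k+1}) - F_i(x_k)$ is at most $\varphi_{s_k}(x_{k+1};x_k,y_k) = \theta_{s_k}(x_k,y_k)$, so the maximum is bounded by $\theta_{s_k}(x_k,y_k)$. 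Finally I would bound $\theta_{s_k}$ itself: replacing the active maximum in $\varphi_{s_k}(x_{k+1};x_k,y_k)$ by the $\lambda$-combination (the support of $\lambda$ lies in the argmax set $\mathcal I(x_k,y_k)$, so the two agree), using convexity on $f_i(y_k) - f_i(x_k)$ and $g_i(x_{k+1}) - g_i(x_k)$, and invoking the optimality relation gives $\theta_{s_k}(x_k,y_k) \le \frac{1}{2s_k}[2\langle y_k - x_{k+1}, y_k - x_k\rangle - \|x_{k+1} - y_k\|^2]$, whose negation dominates the right-hand side of (\ref{15}) because the stated bound carries the weaker $+\|x_{k+1} - y_k\|^2$.

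The routine parts are the two inner-product expansions; the steps needing the most care are the handling of the minimum in (\ref{15}) and the identification of the pointwise maximum inside $\varphi_{s_k}$ with its $\lambda$-weighted average, which is valid precisely because $\lambda$ is supported on $\mathcal I(x_k,y_k)$. I also want to be careful that the descent-lemma coefficient is upgraded from $L(f)/2$ to $1/(2s_k)$ consistently throughout; this is legitimate only because $s_k < 1/L(f)$ and the quadratic terms are nonnegative, and it is exactly where the step-size rule feeds into the estimate.
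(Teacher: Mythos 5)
Your proposal is correct and follows essentially the same route as the paper's proof: descent lemma with the coefficient $\tfrac{1}{2s_k}$ (valid since $s_k<1/L(f)$), replacement of the max/min by the $\lambda$-weighted sum via the complementarity condition (\ref{optimal condition b}), convexity of $f_i$ and $g_i$, the stationarity relation (\ref{optimal condition a}), and the same inner-product expansions. Routing (\ref{15}) through $\theta_{s_k}(x_k,y_k)$ and noting that the computation actually yields the stronger bound with $-\|x_{k+1}-y_k\|^2$ inside the bracket are only cosmetic differences from the paper's argument, which obtains the same estimates directly.
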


\begin{proof}
	Recall that there exists $\tilde{g}(x_k, y_k) \in \partial g(x_{k+1})$ and Lagrange multiplier $\lambda_i(x_k, y_k) \in R^m$ that satisfy the KKT condition (\ref{optimal condition a}, \ref{optimal condition b}) for the subproblem (\ref{subproblem}), we have
	$$
	\begin{aligned}
		\sigma_{k+1}(z)   = \min_{i = 1,\dotsb,m} [F_i(x_{k+1}) -  F_i(z)] \leq \sum_{i=1}^{m} \lambda_i(x_k, y_k) [F_i(x_{k+1}) -  F_i(z) ].
	\end{aligned}
	$$
	Essential from the descent lemma,
	$$
	\begin{aligned}
		\sigma_{k+1}(z)  &\leq \sum_{i=1}^{m} \lambda_i(x_{k}, y_k) [F_i(x_{k+1}) -  F_i(z) ] \\
		&\leq \sum_{i=1}^{m} \lambda_i(x_{k}, y_k) [\langle \nabla f_i(y_k ), x_{k+1} - y_k \rangle + g_i(x_{k+1})+ f_i(y_k ) - F_i(z )  ] \\
		&\quad+ \frac{1}{2s_k} \parallel x_{k+1} - y_k \parallel^2.
	\end{aligned}
	$$
	Hence, the convexity of $f_i$ and $g_i$ yields
	$$
	\begin{aligned}
		\sigma_{k+1}(z)  &\leq \sum_{i=1}^{m} \lambda_i(x_{k}, y_k) [\langle \nabla f_i(y_k ) + \tilde{g}(x_k, y_k), x_{k+1} - z \rangle  ] + \frac{1}{2s_k} \parallel x_{k+1} - y_k \parallel^2.
	\end{aligned}
	$$
	Using (\ref{optimal condition a}) with $x = x_k$ and $y = y_k$ and from the fact that $x_{k+1} = p_\ell(x_k, y_k)$, we obtain
	$$
	\begin{aligned}
		\sigma_{k+1}(z)
		&\leq \frac{1 }{2s_k}[2\langle y_k - x_{k+1}, y_k -z \rangle - \parallel x_{k+1} - y_k \parallel^2],
	\end{aligned}
	$$
	
	From the definition of $\sigma_k(z)$, we obtain
	$$
	\sigma_k(z) - \sigma_{k+1}(z)
	\geq - \max_{i =1, \dotsb,m}[F_i(x_{k+1}) - F_i(x_k)] 	
	$$
	Essential from the descent lemma,
	$$
	\begin{aligned}
		&\quad \sigma_k(z)  - \sigma_{k+1}(z)  \\
		&\geq - \max_{i =1, \dotsb,m} [\langle \nabla f_i(y_k), x_{k+1} - y_k \rangle + g_i(x_{k+1}) + f_i(y_k) -F_i(x_k)]- \frac{1}{2s_k} \parallel x_{k+1} - y_k \parallel^2	\\
		&= -\sum_{i=1}^{m} \lambda_i(x_k, y_k) [\langle \nabla f_i(y_k), x_{k+1} - y_k \rangle + g_i(x_{k+1}) + f_i(y_k) -F_i(x_k)] - \frac{1}{2s_k} \parallel x_{k+1} - y_k \parallel^2 \\
		&= -\sum_{i=1}^{m} \lambda_i(x_k, y_k) [\langle \nabla f_i(y_k), x_{k} - y_k \rangle + f_i(y_k) - f_i(x_k)] \\
		&\quad-\sum_{i=1}^{m} \lambda_i(x_k, y_k) [\langle \nabla f_i(y_k), x_{k+1} - x_k \rangle + g_i(x_{k+1}) - g_i(x_k)] - \frac{1}{2s_k} \parallel x_{k+1} - y_k \parallel^2.
	\end{aligned}
	$$
	where the  first equality comes from (\ref{optimal condition b}), and the second one follows by taking $x_{k+1} - y_k = (x_k - y_k) + (x_{k+1} - x_k)$. From the convexity of $f_i, g_i$, we show that
	$$
	\begin{aligned}
		\sigma_k(z)  - \sigma_{k+1}(z)  \geq& -\sum_{i=1}^{m} \lambda_i(x_k, y_k) \langle \nabla f_i(y_k) + \tilde{g}(x_k, y_k), x_{k+1} - x_{k} \rangle - \frac{1}{2s_k} \parallel x_{k+1} - y_k \parallel^2 .	
	\end{aligned}
	$$
	Thus, (\ref{optimal condition a}) and some calculations prove that
	$$
	\begin{aligned}
		\sigma_k(z) - \sigma_{k+1}(z)
		\geq \frac{1}{2s_k} [2\langle x_{k+1} - y_k, y_k -x_k\rangle  - \parallel x_{k+1} - y_k \parallel^2]. \\
	\end{aligned}
	$$
	So we can get two inequalities of $\sigma_{k}(z)$ and $\sigma_{k+1}(z)$ that we want.
\end{proof}

\begin{theorem}\label{fx0}
	Algorithm \ref{alg1} generates a sequence $\{x_k\}$ such that for all $i =1,\dotsb m$ and $k\geq 0$, we have
	$$F_i(x_k) \leq F_i(x_0).$$
\end{theorem}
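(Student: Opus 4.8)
The plan is to prove the monotonicity bound $F_i(x_k) \le F_i(x_0)$ by induction on $k$, using Proposition \ref{prop5.2} as the engine. The natural choice is to apply inequality (\ref{16}) from the proposition with the specific test point $z = x_0$, and separately to exploit the relationship $\theta_{L(f)}(x,y) \ge \max_i[F_i(p_{L(f)}(x,y)) - F_i(x)]$ established just before Proposition \ref{prop for pl}. Recall that by construction $x_{k+1} = p_{s_k}(x_k, y_k)$, so the descent-type estimate $\max_i[F_i(x_{k+1}) - F_i(x_k)] \le \theta_{s_k}(x_k, y_k)$ holds, and in turn $\theta_{s_k}(x_k,y_k) \le \max_i[F_i(y_k) - F_i(x_k)]$. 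The key will be to control $\max_i[F_i(y_k) - F_i(x_k)]$ and feed it into a telescoping argument.

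First I would set up the induction: the base case $k=0$ is immediate since $x_0 = x_{-1}$ forces $y_0 = x_0$, and then $\max_i[F_i(y_0) - F_i(x_0)] = 0$, giving $F_i(x_1) \le F_i(x_0)$ for all $i$ directly from the chain $\max_i[F_i(x_1) - F_i(x_0)] \le \theta_{s_0}(x_0, y_0) \le \max_i[F_i(y_0) - F_i(x_0)] = 0$. For the inductive step I would assume $F_i(x_j) \le F_i(x_0)$ for all $j \le k$ and all $i$, and aim to show the same for $x_{k+1}$. The difficulty is that $y_k = x_k + \frac{k+\alpha-4}{k+\alpha-1}(x_k - x_{k-1})$ is an extrapolated point that may overshoot, so $\max_i[F_i(y_k) - F_i(x_k)]$ need not be nonpositive in general; convexity of $F_i$ along the segment only bounds $F_i(y_k)$ from \emph{above} by an affine combination that can exceed $F_i(x_k)$. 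This is precisely the gap that the paper's introduction flags in the prior analyses of Sonntag and Zhang, where nonnegativity of $\sigma$ was assumed rather than derived.

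\emph{The hard part will be} reconciling the extrapolation overshoot with the decrease guaranteed by the proximal step. My strategy would be to combine inequality (\ref{15}) of Proposition \ref{prop5.2}, which lower-bounds $\sigma_k(z) - \sigma_{k+1}(z)$, with the special structure that the step sizes $s_k$ satisfy $s_k < 1/L(f)$ (from the preceding theorem) together with the carefully engineered recursion $s_{k+1} = \eta s_k$. The introduction promises that this specific update yields ``a key equality'' that circumvents the nonnegativity of $\sigma$; so I would look for an identity relating the cross term $\langle y_k - x_{k+1}, y_k - x_k\rangle$ and $\|x_{k+1} - y_k\|^2$ to a perfect-square or telescoping quantity in the $\rho_p$ sequence defined in (\ref{aux seq}). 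Concretely, I would substitute the definition of $y_k$ into the bound on $\max_i[F_i(x_{k+1}) - F_i(x_k)]$ and attempt to show the resulting expression is dominated by a nonpositive combination once the step-size inequality $s_k L(f) < 1$ is invoked.

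Finally, I would close the induction by summing or directly estimating: once I establish $F_i(x_{k+1}) - F_i(x_k) \le 0$ for each $i$ (or more robustly, $F_i(x_{k+1}) \le F_i(x_0)$ directly via a telescoped comparison against the base point), the claim follows. I expect the main obstacle to be that a clean per-step decrease $F_i(x_{k+1}) \le F_i(x_k)$ may \emph{not} hold for accelerated (non-monotone) schemes; the realistic target is the weaker cumulative bound $F_i(x_{k+1}) \le F_i(x_0)$, which should be reachable by applying (\ref{16}) with $z = x_0$ and using that $\sigma_{k+1}(x_0) = \min_i[F_i(x_{k+1}) - F_i(x_0)]$ controls the minimum over $i$, then arguing that the maximum is likewise controlled via the Lagrange-multiplier weighting in (\ref{optimal condition b}). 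If the per-index maximum cannot be bounded by the proximal decrease alone, I would fall back on the identity-based argument, using the prescribed $s_k$ schedule to force the extrapolation error terms to cancel against the proximal contraction.
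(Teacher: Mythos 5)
Your submission is a plan rather than a proof, and the plan leaves the essential step open. The base case is fine: since $x_{-1}=x_0$ forces $y_0=x_0$, the chain $\max_i[F_i(x_1)-F_i(x_0)]\le\theta_{s_0}(x_0,y_0)\le\max_i[F_i(y_0)-F_i(x_0)]=0$ is correct. But for $k\ge 1$ you only name the obstacle (the extrapolated $y_k$ may overshoot) and then list intentions (``look for an identity'', ``attempt to show'', ``fall back on''), none of which is carried out. Moreover, the two concrete devices you do propose cannot close the argument as stated. A per-step decrease $F_i(x_{k+1})\le F_i(x_k)$ is false in general for this accelerated scheme, as you yourself concede. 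And applying (\ref{16}) with $z=x_0$ only bounds $\sigma_{k+1}(x_0)=\min_{i}[F_i(x_{k+1})-F_i(x_0)]$, i.e.\ the \emph{best} index, whereas the theorem requires the bound for \emph{every} $i$; your suggestion that the Lagrange-multiplier weighting in (\ref{optimal condition b}) upgrades the minimum to the maximum is unsupported, since $\lambda(x_k,y_k)$ is supported on the active set $\mathcal{I}(x_k,y_k)$ of the subproblem and says nothing about the inactive components. The ``key equality'' advertised in the introduction concerns the cancellation used in the rate analysis (the step leading to (\ref{tpwp})), not this theorem, so invoking it here does not help.

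The missing idea---and the route of the proof the paper itself defers to, namely [Theorem 5.1, \cite{tanabe2023accelerated}]---is a uniform per-step estimate followed by telescoping, not induction. Since $x_{j+1}=p_{s_j}(x_j,y_j)$ minimizes the $\tfrac{1}{s_j}$-strongly convex function $\varphi_{s_j}(\cdot\,;x_j,y_j)$, comparing its optimal value with its value at $z=x_j$ and using the gradient inequality for $f_i$ gives, simultaneously for all $i$,
\begin{equation*}
F_i(x_{j+1})-F_i(x_j)\;\le\;\theta_{s_j}(x_j,y_j)\;\le\;\frac{1}{2s_j}\left(\left\|y_j-x_j\right\|^2-\left\|x_{j+1}-x_j\right\|^2\right),
\end{equation*}
where the left inequality uses $s_j<1/L(f)$ and the descent lemma. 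Because $\|y_j-x_j\|=\frac{|j+\alpha-4|}{j+\alpha-1}\|x_j-x_{j-1}\|\le\|x_j-x_{j-1}\|$ and the update $s_{j+1}=\eta s_j$ with $\eta\ge 1$ makes $\{s_j\}$ nondecreasing, the right-hand sides telescope when summed over $j=0,\dots,k-1$, and the boundary term vanishes because $x_{-1}=x_0$ gives $y_0=x_0$. Hence for each fixed $i$, $F_i(x_k)-F_i(x_0)=\sum_{j=0}^{k-1}\bigl(F_i(x_{j+1})-F_i(x_j)\bigr)\le 0$. Your outline never produces an inequality of this type valid for all indices $i$ at once, and that is precisely the gap you would need to fill.
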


\begin{proof}
	The proof is similar to [Theorem 5.1 \cite{tanabe2023accelerated}]; we omit it.
\end{proof}

\begin{theorem}\label{convergence1}
	Suppose $\left\{x_k\right\}$ and $\left\{y^k\right\}$ be the sequences generated by algorithm \ref{alg1}, for any $z\in\mathbb{R}^n$, it holds that
	$$
	u_{0}(x_k) \leq \frac{L(f)(\alpha-1)^2}{2(k+\alpha-1)^2}R. 
	$$
\end{theorem}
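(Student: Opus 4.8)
The plan is to follow the standard Nesterov-style potential-function argument, adapting the multiobjective FISTA analysis of Tanabe et al. to the new extrapolation coefficient $\frac{k+\alpha-4}{k+\alpha-1}$ and the fixed update rule $s_{k+1}=\eta s_k$. The goal is to construct an energy sequence that decreases along the iterates; the natural candidate, mirroring (\ref{aux seq}), is
\begin{equation*}
	E_k := t_k^2 \, s_{k-1}\,\sigma_k(z) + \tfrac{1}{2}\rho_k(z),
\end{equation*}
where $t_k := k+\alpha-1$ (up to normalization) plays the role of the usual FISTA momentum parameter and $\rho_k(z)=\|(k+\alpha-2)x^{k}-(k+\alpha-4)x^{k-1}-z\|^2$ is the squared-distance term already defined in the excerpt. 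The crux is to show $E_{k+1}\le E_k$, so that $E_k\le E_0$ telescopes.

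First I would invoke Proposition \ref{prop5.2}, which supplies the two controlling inequalities (\ref{15}) and (\ref{16}) on the $\sigma$-differences in terms of the inner products $\langle y_k-x_{k+1},\,\cdot\,\rangle$ and $\|x_{k+1}-y_k\|^2$. The idea is to take the convex-combination-style bound: multiply (\ref{16}) by a weight reflecting the distance $\|x_{k+1}-z\|$ contribution and combine it with (\ref{15}) scaled by the momentum weight, exactly as in the scalar FISTA telescoping. Substituting the explicit extrapolation $y_k = x_k + \frac{k+\alpha-4}{k+\alpha-1}(x_k-x_{k-1})$ lets me rewrite $y_k-x_{k+1}$ and $y_k-x_k$ so that the cross terms assemble into the difference $\rho_k(z)-\rho_{k+1}(z)$ after completing the square. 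This is where the specific coefficient and the relation between $(k+\alpha-2)$, $(k+\alpha-4)$ and the step-size ratio $\eta=\frac{(k+\alpha-2)^2}{(k+\alpha-1)(k+\alpha-3)}$ must interlock precisely.

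The main obstacle — and the whole point flagged in the introduction — is the coefficient bookkeeping that makes the residual term vanish \emph{as an equality} rather than merely a one-sided inequality that would force the sign assumption $\sigma_k(z)\ge 0$. Concretely, after expanding, the leftover quadratic multiplying $\|x_{k+1}-y_k\|^2$ carries a factor of the form $t_{k+1}^2 s_k - t_k^2 s_{k-1}$ (or its analogue); the fixed update $s_{k+1}=\eta s_k$ is engineered so that $t_k^2 s_{k-1}$ is \emph{constant} in $k$, which is exactly why $\eta$ takes that telescoping ratio. Verifying $t_{k+1}^2 s_k = t_k^2 s_{k-1}$ (the ``key equality'' promised in the abstract) is the technical heart: it forces the dangerous term to cancel identically, so the descent $E_{k+1}\le E_k$ holds unconditionally on the sign of $\sigma$. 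I would isolate and prove this algebraic identity first, since everything downstream depends on it.

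Finally, with $E_k\le E_0$ established, I would bound $E_0$ using the initialization $x_{-1}=x_0$, $y_0=x_0$, which collapses $\rho_0(z)$ to $\|2x_0-z\|^2$-type expressions matching the quantity $R$ in Assumption \ref{a1}. Taking the supremum over $z$ converts $\sigma_k$ into the merit function $u_0(x_k)$ via its definition (\ref{3}), and rearranging the surviving $t_k^2 s_{k-1}\,u_0(x_k)\le \tfrac12 E_0 \lesssim R$ inequality, together with $s_{k-1}$ bounded below using the Theorem on $s_k<1/L(f)$ and its lower counterpart, yields the stated $O(1/(k+\alpha-1)^2)$ rate with the explicit constant $\frac{L(f)(\alpha-1)^2}{2}R$. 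The passage from the $z$-dependent bound to the supremum is routine once the telescoping and the constant $E_0\le R$ are in hand; the delicate part remains the equality-forcing step-size identity.
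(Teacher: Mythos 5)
Your outer scaffolding does match the paper's proof: combine the two inequalities of Proposition \ref{prop5.2} with weights summing to one, use the extrapolation formula for $y_k$ together with the completing-the-square identity to turn the cross terms into a difference of the $\rho$-terms, telescope, bound the initial term via (\ref{16}) at $k=0$ with $x_{-1}=x_0=y_0$, and finish with Assumption \ref{a1} and the supremum over $z$. The gap is precisely in the step you single out as the technical heart. The identity you propose to prove first, $t_{k+1}^2 s_k = t_k^2 s_{k-1}$ with $t_k=k+\alpha-1$ (equivalently, that $(k+\alpha-1)^2 s_{k-1}$ is constant in $k$), is false for the prescribed update: since $\eta=\frac{(k+\alpha-2)^2}{(k+\alpha-1)(k+\alpha-3)}>1$, the sequence $s_k$ is increasing, so $(k+\alpha)^2 s_k > (k+\alpha-1)^2 s_{k-1}$ strictly. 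The equality the update is actually engineered to deliver is
$$
s_{k+1}(k+\alpha-1)(k+\alpha-3) \;=\; s_k\,(k+\alpha-2)^2,
$$
i.e. the mixed coefficient $(k+\alpha-1)(k+\alpha-3)$ that the weighted combination of (\ref{15})--(\ref{16}) places in front of $\sigma_k$, once rescaled by $s_{k+1}$, coincides with the coefficient $s_k(k+\alpha-2)^2$ inherited from the previous step; this is what makes the quantity $s_p(p+\alpha-2)^2\sigma_p + 4\rho_p$ telescope with no sign information on $\sigma$. Note also that the dangerous mismatch term in the earlier analyses multiplies the gap $\sigma_k$ (which is exactly why Sonntag et al.\ needed $\sigma_k\ge 0$), not $\|x_{k+1}-y_k\|^2$ as you state.

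The proposal is moreover internally inconsistent: if the coefficient $t_k^2 s_{k-1}$ in your energy $E_k$ really were constant in $k$, then $E_k\le E_0$ would only give a $k$-independent bound on $\sigma_k(z)$, and your final step---dividing by $t_k^2 s_{k-1}$ to extract the $O\bigl(1/(k+\alpha-1)^2\bigr)$ decay---would produce a constant rather than the claimed rate. The rate requires the $\sigma$-coefficient to grow quadratically, which it does in the paper since $s_k$ increases toward $\frac{\alpha-2}{\alpha-3}s_0$ and the coefficient is $s_k(k+\alpha-2)^2$; the correct key equality above preserves this growth while eliminating the leftover term. So, as written, your plan to ``isolate and prove the algebraic identity first'' would fail at that first step, and the energy you define would not simultaneously telescope and yield the stated bound. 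Replacing your identity and the $\sigma$-coefficient in $E_k$ by $s_k(k+\alpha-2)^2$ (with $4\rho_k$ as the distance term) restores the argument, after which the remainder of your outline coincides with the paper's proof.
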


\begin{proof}
	Let $t_k = \frac{k + \alpha - 3}{k +\alpha-1}$. Multiply (\ref{15}) by $\frac{k + \alpha - 3}{k +\alpha-1}$, and(\ref{16}) by $\frac{2}{k +\alpha-1}$, then adding them, we get:
	\begin{equation}\label{reswkwk1}
		\begin{aligned}
			t_k\sigma_{k}  - \sigma_{k+1} &\geq  \frac{1}{2s_{k+1}}[2\langle x_{k+1}-y_{k}, \frac{k + \alpha - 3}{k +\alpha-1}(y_k - x_k)  +\frac{2}{k +\alpha-1}(y_k - z)\rangle +t_{k}\|x_{k+1}-y_{k}\|^{2}],
		\end{aligned}
	\end{equation}
	
	Recall the definition of  $\rho_{p}$. Then, use the algebraic equality
	\begin{equation}\label{eq abc}
			- \parallel a - b \parallel^2 + 2 \langle b-a, b - c \rangle = - \parallel a - c \parallel^2 + \parallel b - c \parallel^2.
	\end{equation}
	 and the definition of $y_p$:
	\begin{equation}\label{tpwp}
		\begin{aligned}
			s_{p+1}(p + \alpha - 1)(p + \alpha - 3) \sigma_p  - s_{p+1}(p + \alpha - 1)^2\sigma_{p+1}
			&\geq 4(\rho_{p+1} - \rho_p).
		\end{aligned}
	\end{equation}
	
	From the update format of $s_k$, we have	
	$$
	\begin{aligned}
		s_p(p+\alpha-2)^2\sigma_p  -s_{p+1}(p+\alpha-1)^2\sigma_{p+1}
		\geq 4(\rho_{p+1} - \rho_p). \\
	\end{aligned}
	$$
	
	Now, let $k\geq0$. Adding the above inequality from $p=1$ to $p=k$ and using $x_{-1} = x_0$ and $\rho_0 =\left\|(\alpha-2) x^0 - (\alpha - 4)x_{-1}-z\right\|^2  = \parallel 2x_0 - z \parallel^2$, we have
	
	$$
	\begin{aligned}
		s_1 \sigma_1 -s_{k+1}(k+\alpha-1)^2\sigma_{k+1} \geq 4(\rho_k -\left\|2x_0-z\right\|^2).
	\end{aligned}$$
	
	Using
	$$
	-\sigma_{k+1}  \geq \frac{1}{2s_{k+1}}[2\langle x_{k+1}- y_k,y_k-z\rangle  +\|x_{k+1}-y_k\|^2]
	$$
	with $k=0$ and $y^1=x^0$ lead to
	
	\begin{equation}\label{sigma1}
		s_1\sigma_{1} \leq \left\|x^{1}-z\right\|^{2}-\left\|x^{0}-z\right\|^{2} \leq \left\|x^{1}-z\right\|^{2}.
	\end{equation}
	From the above two inequalities, we can derive the desired inequality.
	$$
	\begin{aligned}
		s_{k+1}(k+\alpha-1)^2\sigma_{k+1}
		\leq& s_{k+1}(k+\alpha-1)^2\sigma_{k+1} + \rho_k\\
		\leq&  4 \parallel 2 x_0 - z \parallel^2 + \left\|x^{1}-z\right\|^{2}.
	\end{aligned}
	$$
	
	So from above inequality and Assumption (\ref{a1}), we have
	
	$$
	\sigma_{k+1}
	\leq \frac{R}{s_{k+1}(k+\alpha-1)^2}\leq \frac{L(f)}{(k+\alpha-1)^2}R.
	$$
	
	With similar arguments used in the proof of Theorem 3.1 (see [41, Theorem5.2]), we get the desired inequality:
	$$
	u_{0}(x_k) \leq \frac{L(f)}{(k+\alpha-1)^2}R. 
	$$

\end{proof}

Here, we need to note that the parameter $\alpha$ is taken in the range $\alpha \geq 3$. When $\alpha = 3$, the algorithm degenerates to the case considered in the paper by Sonntag K et al.,\cite{sonntag2024fast} with the following update scheme:
$$
\begin{cases}
	y_{k} = x_{k} + \frac{k + \alpha - 4}{k + \alpha -1} (x_{k} - x_{k-1}) \\
	x_{k+1} = p_{L(f)}(x_k,y_{k})
\end{cases}
$$
This can be recovered from our algorithm simply by setting $\alpha = 3$. It is worth noting that the proof in the work of Sonntag K et al. relies on the nonnegativity of $\sigma_k$. However, it is unfortunate that this property is difficult to establish without certain assumptions.

\section{ An accelerated proximal gradient method for nonsmooth  multiobjective optimization}
This section mainly discuss the following nonsmooth composite problem
:
\begin{equation}\label{nmop}
	(NCMOP) \quad \min_{x \in \mathbb{R}^n} F(x) \equiv (f_1(x)+g_1(x), \dotsb, f_m(x)+g_m(x))^T,
\end{equation}
where $f_i :\mathbb{R}^n \to \mathbb{R}$ and $g : \mathbb{R}^n \to \mathbb{R},\forall i = 1,\dotsb,m$ are both convex nonsmooth functions. And we will extend the algorithm in Section 3 to this case with some modifies. Surprisingly, we find that, if we choose the smoothing parameter $\mu_k$ and the approximate series $\{L_k\}$ under some assumptions, we could get some beautiful results.

Drawing inspiration from the achievements reported in \cite{attouch2016rate}, we incorporate extrapolation techniques with parameters \(\beta_{k} = \frac{k-1}{k + \alpha - 1}\), where \(\alpha > 3\). Choosing the smoothing function \(\tilde{c}\) as defined in Definition (\ref{def1}), we formulate an accelerated proximal gradient algorithm to solve the multiobjective optimization problem denoted as (\ref{nmop}). The algorithm achieves a faster convergence rate while also gain the sequential convergence.

Subsequently, we present the methodology employed to address the optimization problem denoted as (\ref{nmop}). Similar to the exposition in \cite{tanabe2023accelerated}, a subproblem is delineated and resolved in each iteration. Using the descent lemma, the proposed approach tackles the ensuing subproblem for prescribed values of \(x \in \text{dom}(F)\), \(y \in \mathbb{R}^{n}\), and \(\ell \geq L\):
\begin{equation}\label{4}
	\min_{z \in \mathbb{R}^{n}} \varphi_{\ell}(z;x,y,\mu),
\end{equation}
where
\begin{equation}\label{5}
	\varphi_l(z;x,y,\mu) := \max_{i=1,\dotsb,m} \left[\left\langle \nabla \tilde{f}_i (y,\mu),z-y\right\rangle +g_i(z)+\tilde{f}_i(y,\mu)-\tilde{F}_i(x,\mu) \right]  + \frac{\ell }{2} \left\|z-y\right\|^2. \\
\end{equation}

Since $g_{i}$ is convex for all $i = 1,\dotsb,m,z \mapsto \varphi_{\ell}(z;x,y,\mu)$ is strongly convex.Thus,the subproblem (\ref{4}) has a unique optimal solution $p_{\ell}(x,y,\mu)$ and attain the optimal function value $\theta_{\ell}(x,y,\mu)$,i.e.,
\begin{equation}\label{6}
	p_{\ell}(x,y,\mu) := \arg \min_{z\in \mathbb{R}^{n}} \varphi_{\ell}(z,x,y,\mu) \  \text{and} \  \theta_{\ell}(x,y,\mu) := \min_{z\in \mathbb{R}^{n}} \varphi_{\ell}(z,x,y,\mu).
\end{equation}

Furthermore, the optimality condition associated with the optimization problem denoted as (\ref{4}) implies that, for all \(x \in \text{dom} \, F\) and \(y \in \mathbb{R}^{n}\), there exists \(\eta(x, y, \mu) \in \partial g(p_{\ell}(x, y, \mu))\) and a Lagrange multiplier \(\lambda(x, y) \in \mathbb{R}^{m}\) such that
\begin{equation}\label{7}
	\sum_{i=1}^{m} \lambda_{i}(x,y) [ \nabla \tilde{f}_{i}(y,\mu) + \eta_{i}(x,y,\mu)] = -\ell [p_{\ell}(x,y) - y]
\end{equation}
\begin{equation}\label{8}
	\lambda(x,y) \in \Delta^{m}, \quad \lambda_{j}(x,y) = 0 \quad \forall j \notin \mathcal{I}(x,y),
\end{equation}
where $\Delta^{m}$ denotes the standard simplex and
\begin{equation}
	\mathcal{I}(x,y) := \arg\max_{i = 1,\dotsb,m}[ \left\langle \nabla \tilde{f}_{i}(y,\mu) , p_{\ell}(x,y,\mu) - y \right\rangle + g_{i}(p_{\ell}(x,y,\mu)) +\tilde{f}_{i}(y,\mu) - \tilde{F}_{i}(x,\mu) ].
\end{equation}

For easy of reference and corresponding to its structure, we call the proposed
algorithm the smoothing accelerated proximal gradient method with
extrapolation term for nonsmooth multiobjective
optimization(SAPGM) in this paper.The algorithm is in the following form.
\begin{algorithm}[H]
	\renewcommand{\algorithmicrequire}{\textbf{Input:}}
	\renewcommand{\algorithmicensure}{\textbf{Output:}}
	\caption{The Smoothing Accelerated Proximal Gradient Method with
		Extrapolation term for Non-smooth Multi-objective
		Optimization}
	\label{alg2}
	\begin{algorithmic}[2]
		\REQUIRE Take initial point $x_{-1}=x_0  \in \text{dom}F$, $y_{0} = x_{0}$, $\varepsilon >0$, $\mu_0 \in R_{++}$, $s_0 \in R_{++}$.  Choose parameters   $\alpha >3$. Assume $0 < \frac{\alpha - 2}{\alpha - 3}s_0 \mu_0 < \frac{1}{L(f)}$. Set maximum iteration number as $K$.
		\FOR{$k = 0,1, \dotsb, K$}
		
		\STATE Compute $y_{k} = x_{k} + \frac{k + \alpha - 4}{k + \alpha -1} (x_{k} - x_{k-1})$
		
		\STATE Compute ${x}_{k+1} = p_{L_k}(x_k,y_{k},\mu_{k}),$ where $L_k = (s_{k} \mu_{k})^{-1}$.
		
		\IF{$\left\|x_k - x_{k+1} \right\| < \varepsilon$ and $\mu_{k} < \epsilon$}
		\RETURN $x_{k+1}$
		\ENDIF
		
		\STATE Update $\mu_{k+1} = \frac{(k + \alpha -2)\mu_k}{k+ \alpha -1}; \ s_{k+1} = \frac{(k + \alpha - 2) s_k}{k + \alpha -3}.$

		\ENDFOR
		\ENSURE  $x^*$: A weakly Pareto optimal point
	\end{algorithmic}
\end{algorithm}

\section{Convergence rate of the algorithm \ref{alg2}}
This section shows that SAPGM has different convergence rates with different $\sigma$  under the  Assumption (\ref{a1}). For the convenience of the complexity analysis, we use some functions defined in \cite{tanabe2023accelerated}. For $k \geq 0$,for all $z \in \mathbb{R}^n$, let $\tilde{\sigma}_{k} : \mathbb{R}^{n} \to \mathbb{R} \cup \{ - \infty\}$  be defined by
\begin{equation}\label{wkuk}
	\begin{aligned}
		\tilde{\sigma}_{k}(z) &:= \min_{i = 1,\dotsb,m}[\tilde{F}_{i}(x_{k},\mu_{k}) - F_{i}(z)]+ \kappa \mu_{k}. \\
		\tilde{\rho}_{k}(z) &:=\left\|(k+\alpha-2)x^{k}-(k + \alpha - 4)x^{k-1} - z\right\|^{2}.
	\end{aligned}
\end{equation}

\begin{proposition}\label{prop6.2}
	
	Let $\tilde{\sigma}_{k}(z)$ define as (\ref{wkuk}), we have
	\begin{equation}\label{15}
		\tilde{\sigma}_{k+1}(z)\leq \tilde{\sigma}_{k}(z) -\frac{L_{k+1}}{2} [2\langle x_{k+1} - y_k, y_k -x_k\rangle  - \parallel x_{k+1} - y_k \parallel^2],
	\end{equation}
	and
	\begin{equation}\label{16}
		\begin{aligned}
			\tilde{\sigma}_{k+1}(z) \leq& \frac{L_{k+1}}{2}[2\langle y_k-x_{k+1},y_k-z\rangle -\|x_{k+1}-y_k\|^2]+2\kappa\mu_{k+1}.
		\end{aligned}
	\end{equation}
\end{proposition}

\begin{proof}
	Recall that there exists $\eta(x_k, y_k, \mu_{k+1}) \in \partial g(x_{k+1})$ and Lagrange multiplier $\lambda_i(x_k, y_k) \in R^m$ that satisfy the KKT condition (\ref{7},\ref{8}) for the subproblem (\ref{4}), we have
	$$
	\begin{aligned}
		\tilde{\sigma}_{k+1}(z)   &= \min_{i = 1,\dotsb,m} [\tilde{F}_i(x_{k+1}, \mu_{k+1}) -  F_i(z)] + \kappa \mu_{k+1} \\
		&\leq \sum_{i=1}^{m} \lambda_i(x_k, y_k) [\tilde{F}_i(x_{k+1}, \mu_{k+1}) -  F_i(z) + \kappa \mu_{k+1}].
	\end{aligned}
	$$
	Basic from the descent lemma,
	$$
	\begin{aligned}
		\tilde{\sigma}_{k+1}(z)  &\leq \sum_{i=1}^{m} \lambda_i(x_{k}, y_k) [\tilde{F}_i(x_{k+1}, \mu_{k+1}) -  F_i(z) + \kappa \mu_{k+1}] \\
		&= \sum_{i=1}^{m} \lambda_i(x_{k}, y_k) [\tilde{F}_i(x_{k+1}, \mu_{k+1}) - \tilde{F}_i(z, \mu_{k+1}) + \tilde{F}_i(z, \mu_{k+1}) -  F_i(z) + \kappa \mu_{k+1}] \\
		&\leq \sum_{i=1}^{m} \lambda_i(x_{k}, y_k) [\tilde{F}_i(x_{k+1}, \mu_{k+1}) - \tilde{F}_i(z, \mu_{k+1}) + 2\kappa \mu_{k+1}] \\
		&\leq \sum_{i=1}^{m} \lambda_i(x_{k}, y_k) [\langle \nabla \tilde{f}_i(y_k, \mu_{k+1}), x_{k+1} - y_k \rangle + g_i(x_{k+1}) \\
		&\quad + \tilde{f}_i(y_k, \mu_{k+1}) - \tilde{F}_i(z, \mu_{k+1}) + 2\kappa \mu_{k+1}] + \frac{L_k}{2} \parallel x_{k+1} - y_k \parallel^2.
	\end{aligned}
	$$
	Hence,the convexity of $f_i$ and $g_i$ yields
	$$
	\begin{aligned}
		\tilde{\sigma}_{k+1}(z)  &\leq \sum_{i=1}^{m} \lambda_i(x_{k}, y_k) [\langle \nabla \tilde{f}_i(y_k, \mu_{k+1}) + \eta(x_k, y_k, \mu_{k+1}), x_{k+1} - z \rangle + 2\kappa \mu_{k+1}] \\
		& \quad+ \frac{L_k}{2} \parallel x_{k+1} - y_k \parallel^2.
	\end{aligned}
	$$
	Using (\ref{7}) with $x = x_k$ and $y = y_k$ and from the fact that $x_{k+1} = p_\ell(x_k, y_k)$, we obtain
	$$
	\begin{aligned}
		\tilde{\sigma}_{k+1}(z)
		&\leq \frac{L_{k+1}}{2}[2\langle y_k - x_{k+1}, y_k -z \rangle - \parallel x_{k+1} - y_k \parallel^2]+ 2 \kappa \mu_{k+1} ,
	\end{aligned}
	$$
	
	From the definition of $\tilde{\sigma}_{k}(z)$, we obtain
	$$
	\begin{aligned}
		\tilde{\sigma}_{k}(z) - \tilde{\sigma}_{k+1}(z)
		\geq - \max_{i =1, \dotsb,m}[\tilde{F}_i(x_{k+1}, \mu_{k+1}) - \tilde{F}_i(x_k, \mu_k)] + \kappa(\mu_k - \mu_{k+1})	
	\end{aligned}
	$$
	Basic from the descent lemma,
	$$
	\begin{aligned}
		&\quad \tilde{\sigma}_{k}(z)  - \tilde{\sigma}_{k+1}(z)  \\
		&\geq - \max_{i =1, \dotsb,m} [\langle \nabla \tilde{f}_i(y_k, \mu_{k+1}), x_{k+1} - y_k \rangle + g_i(x_{k+1}) + \tilde{f}_i(y_k, \mu_{k+1}) -\tilde{F}_i(x_k, \mu_k)]\\
		&\quad  - \frac{L_{k+1}}{2} \parallel x_{k+1} - y_k \parallel^2+ \kappa(\mu_k - \mu_{k+1})	\\
		&= -\sum_{i=1}^{m} \lambda_i(x_k, y_k) [\langle \nabla \tilde{f}_i(y_k, \mu_{k+1}), x_{k+1} - y_k \rangle + g_i(x_{k+1}) + \tilde{f}_i(y_k, \mu_{k+1}) -\tilde{F}_i(x_k, \mu_k)] \\
		&\quad  - \frac{L_{k+1}}{2} \parallel x_{k+1} - y_k \parallel^2+ \kappa(\mu_k - \mu_{k+1})	 \\
		&= -\sum_{i=1}^{m} \lambda_i(x_k, y_k) [\langle \nabla \tilde{f}_i(y_k, \mu_{k+1}), x_{k} - y_k \rangle + \tilde{f}_i(y_k, \mu_{k+1}) - \tilde{f}_i(x_k, \mu_k)] \\
		&\quad-\sum_{i=1}^{m} \lambda_i(x_k, y_k) [\langle \nabla \tilde{f}_i(y_k, \mu_{k+1}), x_{k+1} - x_k \rangle + g_i(x_{k+1}) - g_i(x_k)] \\
		&\quad- \frac{L_{k+1}}{2} \parallel x_{k+1} - y_k \parallel^2+ \kappa(\mu_k - \mu_{k+1})	.
	\end{aligned}
	$$
	where the  first equality comes from (\ref{8}),and the second one follows by taking $x_{k+1} - y_k = (x_k - y_k) + (x_{k+1} - x_k)$. From Definition \ref{def1} (v) and the convexity of $\tilde{f}_i, g_i$, we show that
	$$
	\begin{aligned}
		\tilde{\sigma}_{k}(z)  - \tilde{\sigma}_{k+1}(z)  \geq& -\sum_{i=1}^{m} \lambda_i(x_k, y_k) \langle \nabla \tilde{f}_i(y_k, \mu_{k+1}) + \eta(x_k, y_k, \mu_{k+1}), x_{k+1} - x_{k} \rangle \\
		& - \frac{L_{k+1}}{2} \parallel x_{k+1} - y_k \parallel^2 .	
	\end{aligned}
	$$
	Thus, (\ref{7}) and some calculations prove that
	$$
	\begin{aligned}
		\tilde{\sigma}_{k}(z) - \tilde{\sigma}_{k+1}(z)
		\geq \frac{L_{k+1}}{2} [2\langle x_{k+1} - y_k, y_k -x_k\rangle  - \parallel x_{k+1} - y_k \parallel^2]. \\
	\end{aligned}
	$$
	So we can get two inequalities of $\tilde{\sigma}_{k}$ and $\tilde{\sigma}_{k+1}$ that we want.
\end{proof}

\begin{theorem}\label{th5.4}
	Suppose $\left\{x_k\right\}$ and $\left\{y^k\right\}$ be the sequences generated by SAPGM, for any $z\in\mathbb{R}^n$, it holds that
	$$
	u_0(x_{k+1}) \leq O\left(\frac{1}{k}\right).
	$$
\end{theorem}

\begin{proof}
	Let$((\ref{15}) \times (\frac{k + \alpha - 3}{k + \alpha -1})+(\ref{16})) \times (\frac{2}{k + \alpha - 1})$, we get:
	\begin{equation}\label{reswkwk1}
		\begin{aligned}
			&\frac{k + \alpha - 3}{k + \alpha - 1} \tilde{\sigma}_{k} (z) - \tilde{\sigma}_{k+1} (z) \\
			\geq& \frac{1}{2 s_{k+1} \mu_{k + 1}} \left[ 2 \left\langle x_{k+1} - y_k, y_k - \frac{k + \alpha - 3}{k + \alpha - 1} x_k - \frac{2}{k + \alpha - 1} z\right\rangle + \parallel x_{k+1} - y_k \parallel\right] \\
			&\quad + \frac{4 \kappa \mu_{k+1}}{k + \alpha - 1}
		\end{aligned}
	\end{equation}
	
	With equality (\ref{eq abc}), we get
	$$
	(k + \alpha - 2)^2 s_{k}\mu_{k} \tilde{\sigma}_{k}(z)  - (k + \alpha - 1)^2 s_{k+1}\mu_{k+1} \tilde{\sigma}_{k+1}(z) \geq 2[\tilde{\rho}_{k+1}(z) - \tilde{\rho}_k(z)] + 4\kappa (k + \alpha - 1) \mu_{k+1}^2 s_{k+1}.
	$$

	Applying this inequality repeatedly, we have
	
	\begin{equation}
		s_{k+1}\mu_{k+1} (k+\alpha-1)^2 \tilde{\sigma}_{k+1}(z) + 2\tilde{\rho}_{k+1}(z) \leq s_1\mu_1 W_{1} + \parallel 2 x_0 - z \parallel^2 + 4\kappa  \sum_{i=1}^{k+1} (i + \alpha - 1) \mu_i^2 s_i
	\end{equation}
	
	And we can find that
	$$
	\begin{aligned}
		\mu_i &= \mu_{i-1} \frac{i + \alpha -2}{i + \alpha -1} = \mu_0 \Pi_{j=1}^{i} \frac{j + \alpha - 2}{j + \alpha - 1} = \mu_0 \frac{\alpha - 1}{i + \alpha - 1}; \\
		s_i &= s_{i-1} \frac{i + \alpha - 2}{i + \alpha - 3} = s_0 \Pi_{j = 1}^{i} \frac{j + \alpha - 2}{j + \alpha - 3} = s_o \frac{i + \alpha - 2}{\alpha - 2},
	\end{aligned}
	$$
	consequently,
	$$
	\begin{aligned}
		4 \kappa (i + \alpha - 1) \mu_i^2 s_i = 4 \kappa (i + \alpha - 1) \frac{(\alpha-1)^2}{(i + \alpha - 1)^2} \mu_0^2 \frac{i + \alpha - 2}{\alpha-2} s_0 \\
		= \frac{i + \alpha -2}{i + \alpha -1} 4\kappa \frac{(\alpha-1)^2}{\alpha-2} \mu_0^2 s_0,
	\end{aligned}
	$$
	we can infer that 
	$$
	\sum_{i=1}^{k+1} 4\kappa (i + \alpha -1) \mu_i^2 s_i \leq O(k).
	$$
	$$
	s_{k+1}\mu_{k+1} (k + \alpha - 1)^2 = \frac{s_0 (k + \alpha -2)}{\alpha - 2} \cdot \frac{\mu_0 (\alpha - 1)}{k + \alpha - 1} \cdot (k + \alpha - 1)^2 = 2(k+\alpha -1)(k+\alpha-2) \leq O(k^2).
	$$
	So, we get that
	$$
	\tilde{\sigma}_{k+1}(z) \leq O\left(\frac{1}{k}\right).
	$$
	which implies that
	$$
	u_0(x_{k+1}) \leq O\left(\frac{1}{k}\right).
	$$
\end{proof}

\section{Numerical Result}
Here we put some numerical results.
\begin{figure}[H]
	\centering
	\subfloat{%
		\includegraphics[width=0.45\textwidth]{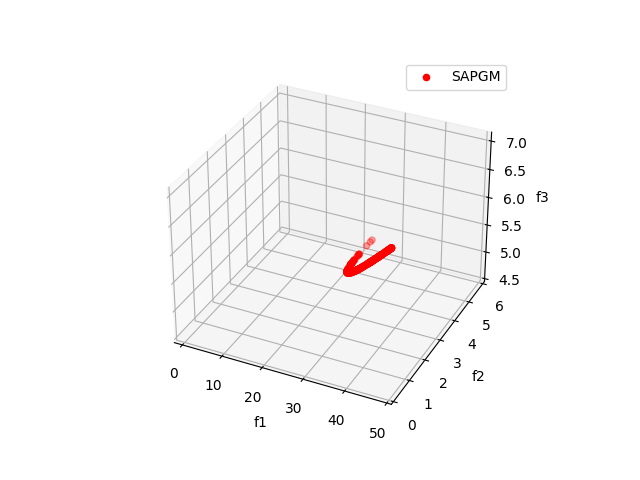}
		\label{BK1SAPGM}
	}
	\hfill
	\subfloat{%
		\includegraphics[width=0.45\textwidth]{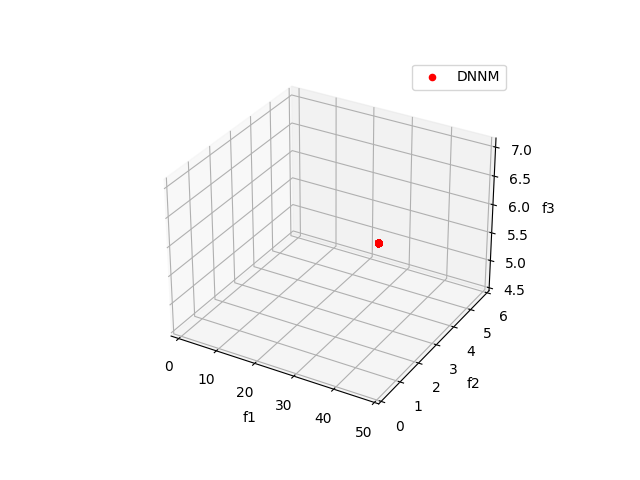}
		\label{BK1DNNM}
	}
	
	\parbox{\textwidth}{\centering (a) BK1\&$\ell_1$}
	
	\subfloat{%
		\includegraphics[width=0.45\textwidth]{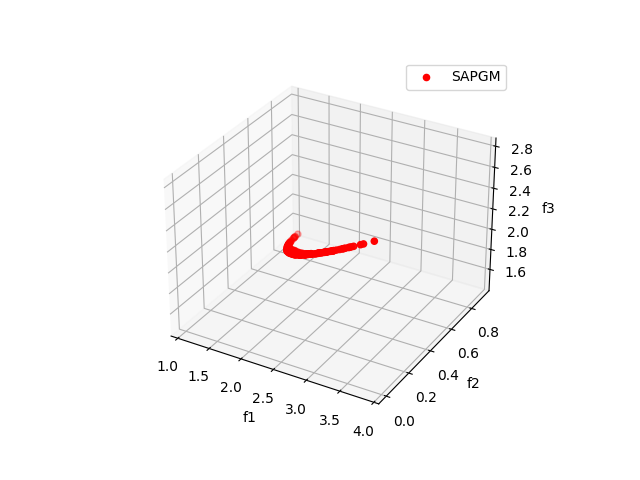}
		\label{jos1SAPGM}
	}
	\hfill
	\subfloat{%
		\includegraphics[width=0.45\textwidth]{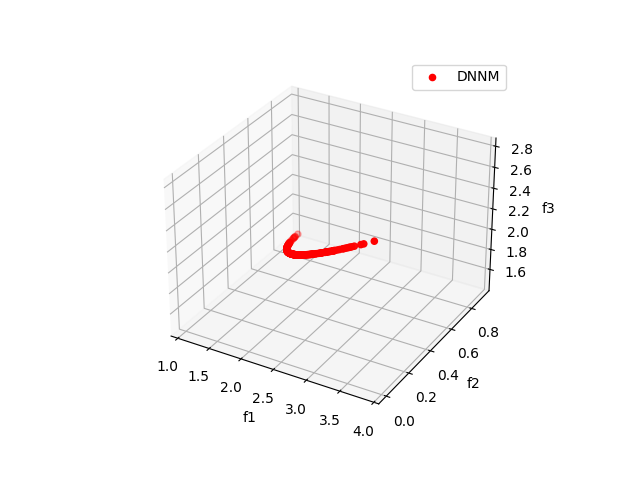}
		\label{fig:jos1DNNM}
	}
	
	\parbox{\textwidth}{\centering (b) JOS1\&$\ell_1$}
	
	\subfloat{%
		\includegraphics[width=0.45\textwidth]{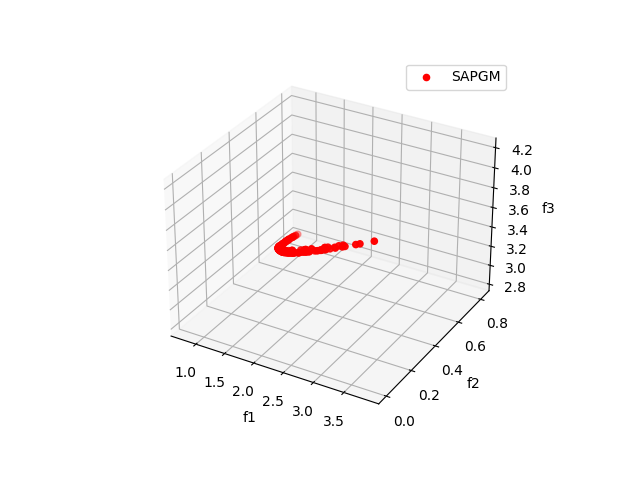}
		\label{SP1SAPGM}
	}
	\hfill
	\subfloat{%
		\includegraphics[width=0.45\textwidth]{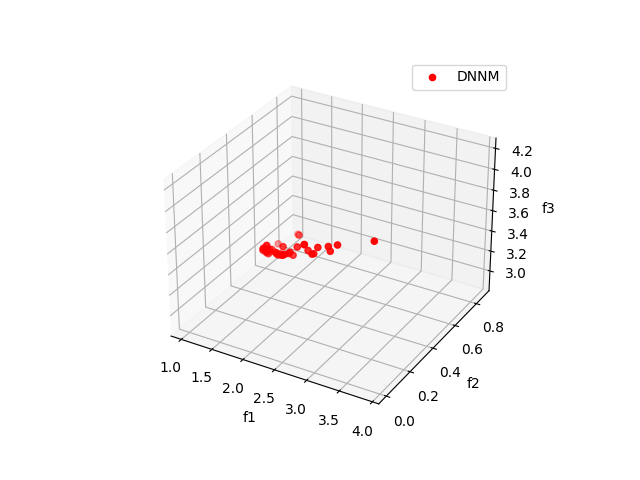}
		\label{SP1DNNM}
	}
	
	\parbox{\textwidth}{\centering (c) SP1\&$\ell_1$}
	
	\caption{The Pareto fronts for Tri-objective optimization problems.}
	\label{The Pareto fronts for Tri-objective optimization problems}
\end{figure} 
	
\end{document}